\theoremstyle{definition} \newtheorem{defi}{Definition}[section] 
\newtheorem{exam}{Example}[section]
\theoremstyle{plain} \newtheorem{prop}{Proposition}[section]
\newtheorem{thm}[prop]{Theorem}
\newtheorem{lem}[prop]{Lemma}
\newcommand{\ble}{\begin {lem}}
\newcommand{\ele}{\end {lem}}
\newcommand{\ptdx}[2]{\frac{\partial #1}{\partial x_{#2}}}
\newcommand{\aut}{\operatorname{Aut}}
\newcommand{\ch}{\operatorname{Char}}
\newcommand{\be}{\begin {equation}}
\newcommand{\ee}{\end {equation}}
\newcommand{\bp}{\begin {proof}}
\newcommand{\ep}{\end {proof}}
\title{A symmetric biderivation structure on polynomial algebras and a class of modules over the special Jordan algebra $H_n(K)$ of symmetric matrices} 
\author{Yangjie Yin\\School of Mathematics \\ Zhejiang
	University\\ yj\_yin@zju.edu.cn
    \and Gang Han\thanks{Corresponding Author}\\School of Mathematics \\ Zhejiang
    University\\mathhgg@zju.edu.cn }
\date{September 27, 2025 }
\begin{document}

\maketitle

\begin{abstract}
There exists a biderivation structure on the polynomial algebra
$
\mathscr{A}[n] = K[x_1,\dots,x_n],
$
where $K$ is a field with $\operatorname{char}(K)\ne 2$, defined by
$
f \circ h = \sum_{i=1}^n \frac{\partial f}{\partial x_i}\,\frac{\partial h}{\partial x_i}.
$
Let $\mathscr{A}_k[n]$ denote the subspace of homogeneous polynomials of degree $k$. Then
 $(\mathscr{A}_2[n],\circ)$ is a Jordan algebra, isomorphic to the special Jordan algebra $H_n(K)$ of $n\times n$ symmetric matrices. Each $\mathscr{A}_k[n]$ is a natural $\mathscr{A}_2[n]$-bimodule, which admits a weight space decomposition with respect to a complete set of mutually orthogonal idempotents. In particular, the weight space decomposition of $\mathscr{A}_2[n]$ coincides with its Peirce decomposition. $\mathscr{A}_k[n]$ is a Jordan bimodule if and only if $k=0,1,2$. Equivalently, for all $k\ge 3$, $\mathscr{A}_k[n]$ is not a Jordan bimodule.
 The group of algebra automorphisms of $(\mathscr{A}[n],\cdot,\circ)$ that preserve each homogeneous component $\mathscr{A}_k[n]$ is isomorphic to the orthogonal group $O(n,K)$.
 If $\operatorname{char}(K)=0$, then the algebra $(\mathscr{A}[n],\cdot,\circ)$ is simple, i.e., it has no nonzero proper ideals. Moreover, in this case, each $\mathscr{A}_k[n]$ is a simple $\mathscr{A}_2[n]$-bimodule.
\end{abstract}

\section{Introduction}

Biderivation structures naturally arise in many areas of mathematics, particularly in the study of commutative associative algebras. Classical examples include Poisson algebras, where a commutative associative algebra is equipped with a Poisson bracket, and the Beltrami bracket on the algebra of smooth functions $C^\infty(M)$ over a Riemannian manifold $(M,g)$, introduced by Crouch \cite{crouch1981} (see also \cite{van1984}), defined by
\begin{equation}\label{10}
    [f,h] := g(\nabla f, \nabla h), \qquad f,h \in C^\infty(M),
\end{equation}
where $\nabla f$ denotes the gradient of $f$. In 1986, Morrison \cite{morrison1986} introduced the metriplectic system, a bilinear bracket given by the sum of a Poisson bracket and a symmetric bracket, providing a framework for dynamical systems with dissipation (see also \cite{guha2007,bloch2024}). More generally, \cite{ortega2004} defines a Leibniz bracket, a bilinear biderivation on $C^\infty(M)$ encompassing these cases. These studies focus primarily on the associated dynamical systems, leaving the algebraic properties and related module structures largely unexplored.

In this paper, we investigate the polynomial algebra
$
\mathscr{A}[n] = K[x_1,\dots,x_n]
$
over a field $K$ of characteristic not equal to 2, equipped with the bilinear product
\[
f \circ h := \sum_{i=1}^n \frac{\partial f}{\partial x_i}\frac{\partial h}{\partial x_i}.
\]
This operation, inspired by the Beltrami bracket, defines a commutative but non-associative biderivation. We refer to $(\mathscr{A}[n],\cdot,\circ)$ as the \textit{polynomial algebra with the standard symmetric biderivation structure}. When $\ch(K)=0$, this algebra is simple (Proposition \ref{simp}).

Let $\mathscr{A}_i[n]$ denote the subspace of homogeneous polynomials of degree $i$ in $\mathscr{A}[n]$. Then
\begin{equation}\label{11}
    \mathscr{A}_i[n] \circ \mathscr{A}_j[n] \subset \mathscr{A}_{i+j-2}[n],
\end{equation}
so that $\mathscr{A}_2[n]$ is closed under $\circ$. By computing the Lie brackets of gradient vector fields, $(\mathscr{A}_2[n],\circ)$ is shown to be a Jordan algebra, isomorphic to the special Jordan algebra $H_n(K)$ of symmetric $n \times n$ matrices (Theorem \ref{ThmJA2}). Each $\mathscr{A}_k[n]$ naturally becomes an $\mathscr{A}_2[n]$-bimodule, which is a Jordan bimodule precisely when $k=0,1,2$ (Theorem \ref{ThmJM}), and all bimodules are simple when $\ch(K)=0$ (Theorem \ref{simpMod}). Note that  the  theory of Jordan bimodules over finite dimensional semisimple Jordan algebras was developed by Jacobson in \cite{jacobson1968}. Subsequently, the study of Jordan bimodules over simple Jordan algebras via weights relative to a fixed Cartan subalgebra was carried out in \cite{g1978}. But in our case, the  $\mathscr{A}_2[n]$-bimodule $\mathscr{A}_k[n]$ is not a Jordan bimodule for $k\ge3$.

With respect to the complete set of mutually orthogonal primitive idempotents
$
\left\{\frac{1}{4}x_1^2, \dots, \frac{1}{4}x_n^2\right\} 
$ of $\mathscr{A}_2[n]$,
each $\mathscr{A}_k[n]$ admits a weight space decomposition (Theorem \ref{ThmWSD}). In particular, in the case $k=2$, the decomposition  coincides with the Peirce decomposition for $\mathscr{A}_2[n]$. Finally, the group of automorphisms of $(\mathscr{A}[n],\cdot,\circ)$ preserving each homogeneous component  $\mathscr{A}_k[n]$, $\aut^0(\mathscr{A}[n],\cdot,\circ)$, is shown to be isomorphic to the orthogonal group $O(n,K)$ (Theorem \ref{ThmAut}).

This paper is organized as follows.

In Section \ref{SecAbs}, we recall the notion of algebras with a biderivation structure and provide examples from differential and algebraic geometry. We then examine basic properties of the polynomial algebra with the standard symmetric biderivation structure. In particular, this algebra is neither associative nor Lie. Moreover, when $\ch(K)=0$, the algebra $(\mathscr{A}[n],\cdot,\circ)$ is simple for all $n \ge 1$.

Section \ref{SecJAJM} is devoted to the  structure of $\mathscr{A}_{\le 2}[n]$ under the induced biderivation. By computing the Lie brackets of gradient vector fields, we show that it forms a Jordan algebra and focus on its  subalgebra $\mathscr{A}_2[n]$, which is isomorphic to the special Jordan algebra $H_n(K)$. We also  show that $\mathscr{A}_k[n]$ is a Jordan bimodule over  $\mathscr{A}_2[n]$ if and only if  $k=0,1,2$.

In Section \ref{SecRSD}, we study the weight space decomposition of $\mathscr{A}[n]$ as a bimodule over $\mathscr{A}_2[n]$. By choosing a complete set of mutually orthogonal primitive idempotents in $\mathscr{A}_2[n]$, which form a basis of a Cartan subalgebra, we determine the weights and weight space decompositions of $\mathscr{A}[n]$ and its homogeneous components $\mathscr{A}_k[n]$. In particular, the decomposition of $\mathscr{A}_2[n]$ coincides with its Peirce decomposition. We further show that each bimodule $\mathscr{A}_k[n]$ is simple in the case when $\ch(K)=0$.

Finally, in Section \ref{SecAutGp}, we describe the structure of the subgroup $\aut^0(\mathscr{A}[n])$ of $\aut(\mathscr{A}[n])$ that preserves the homogeneous components, and the full automorphism group $\aut(\mathscr{A}[1])$. We show that $\aut^0(\mathscr{A}[n])$ is isomorphic to the orthogonal group $O(n,K)$.

\section{Algebras with a biderivation structure} \label{SecAbs}

Let us recall the definition of a biderivation structure on  a commutative associative algebra.

\begin{defi}
Let $K$ be a field and $(\mathscr{A},\cdot)$ a commutative associative algebra over $K$. A bilinear map
$\Psi: \mathscr{A} \times \mathscr{A} \to \mathscr{A}$
is called a \textbf{biderivation structure} on $(\mathscr{A},\cdot)$ if, for each $f \in \mathscr{A}$, the maps
\begin{align*}
    \Psi(f,\cdot): \mathscr{A} \to \mathscr{A}, &\quad g \mapsto \Psi(f,g);\\
    \Psi(\cdot,f): \mathscr{A} \to \mathscr{A}, &\quad g \mapsto \Psi(g,f),
\end{align*}
are $K$-derivations of $(\mathscr{A},\cdot)$. In this case, the triple $(\mathscr{A},\cdot,\Psi)$ is called an \textbf{algebra with a biderivation structure}. 

The biderivation $\Psi$ is said to be \textbf{symmetric} (resp. \textbf{skew-symmetric}) if
$\Psi(f,g) = \Psi(g,f)$ (resp. $\Psi(f,g) = -\Psi(g,f)$)
for all $f,g \in \mathscr{A}$.
\end{defi}

The following examples from differential and algebraic geometry illustrate and motivate the definition of a biderivation structure.

\begin{exam}
Let $(P,\omega)$ be a symplectic manifold, and let $C^\infty(P)$ denote the algebra of smooth functions on $P$. For each $f \in C^\infty(P)$, let $X_f$ denote its Hamiltonian vector field, defined by $\omega(X_f,Y) = df(Y)$ for all smooth vector fields $Y$ on $P$. 
Then the algebra $C^\infty(P)$ becomes a Poisson algebra equipped with the Poisson bracket $\{f,g\} := \omega(X_f,X_g)$.
In particular, $\{\cdot,\cdot\}$ defines a skew-symmetric biderivation structure on $C^\infty(P)$.
\end{exam}

\begin{exam}\label{Rmm}
Let $(M,g)$ be a (pseudo-)Riemannian manifold, and let $C^\infty(M)$ denote the algebra of smooth functions on $M$. For $f \in C^\infty(M)$, let $\nabla f$ denote the gradient vector field of $f$, i.e., $g(\nabla f,Y) = df(Y)$ for all smooth vector fields $Y$ on $M$.
Define a bilinear operation
\begin{align*}
        [\cdot,\cdot]\colon \quad C^\infty(M) \times C^\infty(M) \quad &\longrightarrow \quad C^\infty(M),\\
        (f,h) \quad \quad \quad \quad &\longmapsto \quad g(\nabla f , \nabla h).
\end{align*}
Then $[\cdot,\cdot]$ is a symmetric biderivation structure on $C^\infty(M)$, commonly referred to as the Beltrami bracket \cite{crouch1981}.
\end{exam}

Now we give a generalization of above examples.
\begin{exam}\label{3}
Let $(X,h)$ be a smooth affine variety over $K$ with a nowhere-degenerate symmetric tensor field $h$ ($\in\Gamma(X,\operatorname{Sym}^2\Omega^1_X)$) of type $(0,2)$ on $X$, and $K[X]$ be the coordinate ring over $X$. The following operation
  \begin{align*}
	 \quad K[X] \times K[X] \quad &\longrightarrow \quad K[X],\\
	(f,g) \quad \quad \quad \quad &\longmapsto \quad h(\nabla f , \nabla g),
\end{align*} is a symmetric biderivation structure on $K[X]$. 
\end{exam}

One can extend the usual notions of subalgebras, ideals, and quotient algebras to algebras endowed with a biderivation structure, as well as define morphisms between such algebras.

Let $(\mathscr{A},\cdot,\Psi)$ be an algebra with a biderivation $\Psi$. A \textbf{subalgebra} of $(\mathscr{A},\cdot,\Psi)$ is an associative subalgebra $\mathscr{B} \subseteq \mathscr{A}$ that is closed under $\Psi$; that is, $\Psi(x,y) \in \mathscr{B}$ for all $x,y \in \mathscr{B}$.

A subspace $I \subseteq \mathscr{A}$ is called an \textbf{ideal} of $(\mathscr{A},\cdot,\Psi)$ if it is an ideal of the associative algebra $(\mathscr{A},\cdot)$ and satisfies
\[
\Psi(x,y), \, \Psi(y,x) \in I, \quad \text{for all } x \in \mathscr{A}, \ y \in I.
\]
If $I$ is an ideal, the \textbf{quotient algebra} $\mathscr{A}/I$ inherits a biderivation structure $\overline{\Psi}$ defined by
\[
\overline{\Psi}(\overline{x},\overline{y}) := \overline{\Psi(x,y)}, \quad \text{for all} \ \overline{x},\overline{y} \in \mathscr{A}/I,
\]
which is well-defined, making $(\mathscr{A}/I, \cdot, \overline{\Psi})$ an algebra with the biderivation structure $\overline{\Psi}$.

Finally, $(\mathscr{A},\cdot,\circ)$ is called \textbf{simple} if it has no nontrivial ideals; that is, its only ideals are $0$ and $\mathscr{A}$ itself.

A \textbf{morphism}
\[
\varphi\colon (\mathscr{A},\cdot, \Psi_1) \longrightarrow (\mathscr{B},\cdot,\Psi_2)
\]
between algebras with biderivation structures is an associative algebra homomorphism satisfying
\[
\varphi\bigl(\Psi_1(x,y)\bigr) = \Psi_2\bigl(\varphi(x),\varphi(y)\bigr) \quad \text{for all } x,y \in \mathscr{A}.
\]
An \textbf{endomorphism} of $\mathscr{A}$ is a morphism from $\mathscr{A}$ to itself, and an \textbf{automorphism} of $\mathscr{A}$ is a bijective endomorphism whose inverse is also a morphism.

The group of automorphisms of $(\mathscr{A},\cdot,\Psi)$ is denoted by $\aut(\mathscr{A},\cdot,\Psi)$, or simply by $\aut(\mathscr{A})$.
\\

Now we introduce the polynomial algebra with the standard symmetric biderivation structure, which is the main object to study in this paper. The following notations will be kept henceforth.

1. $K$ is a field with $\ch (K) \neq 2$ thus contains $\frac{1}{2}$. 

2. $n\ge1$ is a positive integer. 


For $u=(a_1,\cdots,a_n)$ and $v=(b_1,\cdots,b_n)\in K^n$,  the inner product $u\cdot v=\sum_{i=1}^n a_ib_i$ is the \textbf{standard symmetric bilinear form} on $K^n$.

3. $\mathscr{A}[n]= K[x_1, \cdots, x_n]$. For all $f, g \in \mathscr{A}[n]$,
\begin{equation}
   f\circ g: = \sum_{i=1}^n \ptdx{f}{i}\ptdx{g}{i}= \nabla f \cdot \nabla g. 
\end{equation}\\[1mm]

We adopt the notation $\circ$, as the algebra $(\mathscr{A}[n],\circ)$ will later be shown to admit a close connection with a Jordan algebra.

It is straightforward to verify that the bilinear map
\[
\circ : \mathscr{A}[n] \times \mathscr{A}[n] \longrightarrow \mathscr{A}[n]
\]
defines a symmetric biderivation structure on $\mathscr{A}[n]$. Moreover, $1 \circ f = 0$ for all  $f \in \mathscr{A}[n]$.
We shall refer to $(\mathscr{A}[n],\cdot,\circ)$ as the polynomial algebra in $n$ variables with the \textbf{standard symmetric biderivation structure}.

In fact, this algebra arises as a special case of Example \ref{3}.
Here $X = \mathbb{A}^n(K)$ is the affine space over $K$, and $\mathscr{A}[n] = K[X]$ is its coordinate algebra. Let $h$ be the symmetric tensor field of type $(0,2)$ on $X$ such that, for each point $P \in \mathbb{A}^n(K)$, the restriction of $h$ to the tangent space $T_P(X) \cong K^n$ coincides with the standard symmetric bilinear form.

The product $\circ$ admits a natural geometric interpretation.
For $f \in \mathscr{A}[n]$, the gradient vector field with respect to the bilinear form $h$ is given by
\[
\nabla f = \sum_{i=1}^n \frac{\partial f}{\partial x_i}\,\frac{\partial}{\partial x_i}.
\]
Then, for any $f,g \in \mathscr{A}[n]$, one has
\begin{equation}\label{2}
f \circ g = \Bigg(\sum_{i=1}^n \frac{\partial f}{\partial x_i}\frac{\partial}{\partial x_i}\Bigg)(g) = (\nabla f)(g).
\end{equation}
By symmetry of the construction, the identity $f \circ g = (\nabla g)(f)$ also holds.

Let $ \mathscr{A}_i[n]$ be the subspace of homogeneous polynomials of degree $i$, then 
\[ \mathscr{A}[n] = \bigoplus_{i \geq 0}  \mathscr{A}_i[n];\ \quad \text{and} \quad  \mathscr{A}_i[n]  \cdot \mathscr{A}_j[n] \subset \mathscr{A}[n]_{i+j}, \ \forall i,j\ge0.\]
For any homogeneous $f, g$, we have $\deg (f \circ g) = \deg f + \deg g -2$,
hence 
\be
\label{7}\mathscr{A}_i[n] \circ \mathscr{A}_j[n] \subset \mathscr{A}_{i+j-2}[n],\ \forall i,j\ge0.
\ee

\begin{prop} \label{simp}
    Assume $\ch(K)=0$. Then $(\mathscr{A}[n],\cdot,\circ)$ is simple  for all $n\ge1$.
\end{prop}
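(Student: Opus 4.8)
The plan is to show that any nonzero ideal $I$ of $(\mathscr{A}[n],\cdot,\circ)$ must equal the whole algebra, exploiting the interplay between the associative multiplication $\cdot$ and the biderivation $\circ$, together with the characteristic-zero hypothesis. First I would take a nonzero element $f \in I$ and use that $I$ is an ideal of the \emph{associative} algebra $(\mathscr{A}[n],\cdot)$ — but since $\mathscr{A}[n]$ is a polynomial ring, its associative ideals are plentiful, so the real leverage comes from combining $\cdot$-absorption with $\circ$-absorption. The key observation is formula \eqref{2}: $g \circ f = (\nabla g)(f) = \sum_i \frac{\partial g}{\partial x_i}\frac{\partial f}{\partial x_i}$, so applying $\circ$ against the linear and quadratic polynomials lets us extract partial derivatives of $f$. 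Concretely, $x_j \circ f = \frac{\partial f}{\partial x_j} \in I$ for each $j$, since $x_j \in \mathscr{A}[n]$ and $I$ is a $\circ$-ideal. Hence $I$ is closed under all partial derivatives $\partial/\partial x_j$.

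Next I would iterate: starting from a nonzero $f$ of degree $d$, repeatedly applying the $\partial/\partial x_j$ (which stay in $I$) and using $\ch(K)=0$ to guarantee these derivatives do not all vanish, I can descend to a nonzero constant, i.e. $1 \in I$ after rescaling. Here characteristic zero is essential: in characteristic $p$ a polynomial like $x_1^p$ would be killed by $\partial/\partial x_1$, obstructing the descent; over a field of characteristic $0$, some iterated partial derivative of any nonzero polynomial is a nonzero scalar. Once $1 \in I$, I need to produce every polynomial from $1$ using the two operations. Multiplication by $1$ under $\cdot$ is useless (it is the identity), and $1 \circ g = 0$ for all $g$, so $1$ alone generates nothing under $\circ$; instead I should be more careful and not collapse all the way to $1$ too hastily. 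The cleaner route: show that $I$ contains some $x_j$ (or more generally a degree-one element), because then $x_j \cdot g \in I$ for all $g$, and by varying $j$ and using that the $x_j$ generate $\mathscr{A}[n]$ as an associative algebra together with $1$, we would get $I \supseteq x_j \cdot \mathscr{A}[n]$ for all $j$, hence $I$ contains the maximal ideal $(x_1,\dots,x_n)$; then one more step using $\circ$ or the fact that $I$ is $\cdot$-closed and contains a unit multiple finishes it. So the refined plan is: descend via partial derivatives until reaching a nonzero element of degree $\le 1$; if that element is a nonzero constant $c$, note $c = \frac{1}{2}(x_j \circ x_j)\cdot$(something) — more simply, rescale to get $1\in I$ and then observe $x_j = x_j \cdot 1 \in I$; either way conclude $x_1,\dots,x_n \in I$, whence $I = \mathscr{A}[n]$ by $\cdot$-absorption.

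The main obstacle I anticipate is the bookkeeping in the descent step and making sure at each stage the element I keep is genuinely nonzero and lands in $I$: one must choose, at each step, an index $j$ for which $\partial/\partial x_j$ does not annihilate the current element, and argue such $j$ exists in characteristic $0$ (e.g. by looking at a monomial of top degree with a variable appearing to positive power). A secondary subtlety is that $\circ$ lowers degree by $2$ while $\cdot$ raises it, and $1$ is a $\circ$-annihilator, so one must avoid the trap of landing on a constant and being stuck; the fix is to stop the derivative-descent at degree $1$ rather than degree $0$ when possible, or to re-enter higher degrees via $\cdot$-multiplication by the $x_j$ once a nonzero scalar (hence, after the trivial step, each $x_j = 1\cdot x_j$) is available. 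Once $x_1,\dots,x_n\in I$, the conclusion $I=\mathscr{A}[n]$ is immediate since $I$ is an ideal for the associative product and $1\in I$. I would also remark explicitly that the hypothesis $\ch(K)=0$ is used only to prevent $p$-th power monomials from being invisible to all partial derivatives, which is exactly the phenomenon that breaks simplicity in positive characteristic.
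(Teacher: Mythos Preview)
Your approach is correct and essentially identical to the paper's: both use $x_j \circ f = \partial f/\partial x_j$ to descend from a nonzero $f\in I$ to a nonzero constant (the paper does this in one shot by targeting a top-degree monomial $ax_1^{i_1}\cdots x_n^{i_n}$ and computing $x_n^{\circ i_n}\circ\cdots\circ(x_1^{\circ i_1}\circ f)=i_1!\cdots i_n!\,a$), then invoke $\ch(K)=0$. Your worry about ``being stuck'' at $1$ is unfounded and the detour through degree~$1$ is unnecessary: since $I$ is by definition an ideal for the associative product $\cdot$, once $1\in I$ you get $g = g\cdot 1 \in I$ for every $g$, so $I=\mathscr{A}[n]$ immediately.
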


\bp
Let $I \subseteq \mathscr{A}[n]$ be a nonzero ideal, and choose a nonzero element $f \in I$.
Let $ax_1^{i_1}\cdots x_n^{i_n}$ be one of the monomials of highest degree appearing in $f$ with coefficient $a \in K^\times$. Then, by successive applications of the product $\circ$, we obtain
\[
x_n^{\circ i_n} \circ \Big(x_{n-1}^{\circ i_{n-1}} \circ \big(\cdots \circ (x_1^{\circ i_1} \circ f)\cdots\big)\Big)  
= i_1! \cdots i_n! \, a \in I,
\]
where $x_k^{\circ i_k}$ denotes the $i_k$-fold $\circ$-power of $x_k$ ($k=1,\dots,n$).

Since $\ch(K) = 0$, the scalar $i_1!\cdots i_n!\, a$ is nonzero, hence $1 \in I$, and $I = \mathscr{A}[n]$.
\ep

It is clear that $(\mathscr{A}[i],\cdot,\circ)$ is a subalgebra of $(\mathscr{A}[i+1],\cdot,\circ)$ for all $i \geq 1$. We now explore some basic properties of $(\mathscr{A}[n],\circ)$.

The associator 
\[
[x_1^i,x_1^j,x_1^k] := (x_1^i \circ x_1^j)\circ x_1^k - x_1^i \circ (x_1^j \circ x_1^k) = ijk(i-k)\,x_1^{\,i+j+k-4},
\]
which is not always zero. Hence $(\mathscr{A}[n],\circ)$ is non-associative for all $n \geq 1$.

Moreover, since $x_1 \circ x_1 = 1 \neq 0$,
the operation $\circ$ does not satisfy the alternating property. On the other hand, the Jacobiator 
\begin{align*}
J(x_1^i,x_1^j,x_1^k) &:= (x_1^i \circ x_1^j)\circ x_1^k + (x_1^j \circ x_1^k)\circ x_1^i + (x_1^k \circ x_1^i)\circ x_1^j \\
&= 2ijk(i+j+k-3)\,x_1^{\,i+j+k-4},
\end{align*}
which is not always zero. Thus $(\mathscr{A}[n],\circ)$ is not a Lie algebra for all $n \geq 1$.

Then we would like to know if $(\mathscr{A}[n],\circ)$ is a Jordan algebra.

\section{The Jordan Algebra $\mathscr{A}_2[n]$ and the algebra $\mathscr{A}[n]$ as its bimodule} \label{SecJAJM}

Notice that $\mathscr{A}_2[n]$ is closed under the product $\circ$ by (\ref{7}). We will see that $(\mathscr{A}_2[n],\circ)$ is a Jordan algebra. Let us recall the definition of Jordan algebra.

\begin{defi}
A \textbf{Jordan algebra} is a vector space $J$ over a field $K$ together with a bilinear product $\circ: J \times J \to J$ such that
    \begin{align*}
    x \circ y &= y \circ x, \tag{J1} \\
    x \circ (y \circ x^2) &= (x \circ y) \circ x^2 \tag{J2} \label{J2}
    \end{align*}
hold for any $x, y \in J$.
\end{defi}

In $\mathscr{A}[n]$, it follows from (\ref{2}) that for all $f \in \mathscr{A}[n]$,
\[L_f = \nabla f:\mathscr{A}[n]\to \mathscr{A}[n].\]

For each $i \geq 0$, let
\[
\mathscr{A}_{\le i}[n] = \bigoplus_{k=0}^i \mathscr{A}_{k}[n]
\]
be the subspace of $\mathscr{A}[n]$ consisting of all polynomials of degree at most $i$.

For any polynomial $f \in K[x_1,\dots,x_n]$, we adopt the following shorthand for its partial derivatives:
\[
f_i := \frac{\partial f}{\partial x_i}, 
\qquad 
f_{ij} := \frac{\partial^2 f}{\partial x_i \partial x_j}, 
\qquad 
f_{ijk} := \frac{\partial^3 f}{\partial x_i \partial x_j \partial x_k}.
\]

\begin{thm}\label{ThmJA2}
Let $f,g \in \mathscr{A}[n]$. Then the commutator of their gradient vector fields is given by
\begin{equation}
    [\nabla f,\nabla g]
    =\sum_{k=1}^n \left( \sum_{i=1}^n f_i g_{ik} - g_i f_{ik} \right)\frac{\partial}{\partial x_k}.
\end{equation}
Moreover, for any $f \in \mathscr{A}[n]$, one has
\begin{equation}
    \big[\nabla f, \nabla (f \circ f)\big] \;=\; 2\sum_{i,j,k=1}^n f_i f_j f_{ijk}\,\frac{\partial}{\partial x_k}.
\end{equation}
In particular, if $f \in \mathscr{A}_{\le 2}[n]$, then
\[
\big[\nabla f, \nabla (f \circ f)\big] = 0.
\]
\end{thm}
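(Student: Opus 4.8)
The plan is to prove the three displayed identities in Theorem \ref{ThmJA2} by direct computation with vector fields, then specialize. First I would compute the Lie bracket $[\nabla f, \nabla g]$ of the two gradient vector fields $\nabla f = \sum_i f_i \frac{\partial}{\partial x_i}$ and $\nabla g = \sum_j g_j \frac{\partial}{\partial x_j}$ using the standard formula for the commutator of vector fields $[\sum_i a_i \partial_i, \sum_j b_j \partial_j] = \sum_k \big(\sum_i (a_i \partial_i b_k - b_i \partial_i a_k)\big)\partial_k$. Substituting $a_i = f_i$, $b_j = g_j$, and noting $\partial_i(g_k) = g_{ik}$ and $\partial_i(f_k) = f_{ik}$, this yields exactly the claimed expression $[\nabla f,\nabla g] = \sum_k \big(\sum_i f_i g_{ik} - g_i f_{ik}\big)\partial_k$.

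Next I would substitute $g = f\circ f = \sum_j f_j^2$ into this formula. The key sub-step is to differentiate: $(f\circ f)_k = \partial_k\big(\sum_j f_j^2\big) = 2\sum_j f_j f_{jk}$, and then $(f\circ f)_{ik} = \partial_i\big(2\sum_j f_j f_{jk}\big) = 2\sum_j (f_{ij} f_{jk} + f_j f_{ijk})$. Plugging these into $[\nabla f, \nabla(f\circ f)] = \sum_k \big(\sum_i f_i (f\circ f)_{ik} - (f\circ f)_i f_{ik}\big)\partial_k$, the $k$-th coefficient becomes $\sum_i \big(2f_i \sum_j (f_{ij}f_{jk} + f_j f_{ijk}) - 2(\sum_j f_j f_{ji}) f_{ik}\big)$. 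The term $2\sum_{i,j} f_i f_{ij} f_{jk}$ from the first bracket and the term $-2\sum_{i,j} f_j f_{ji} f_{ik}$ from the second bracket cancel after relabeling $i\leftrightarrow j$ (using symmetry $f_{ij}=f_{ji}$), leaving precisely $2\sum_{i,j} f_i f_j f_{ijk}$, which is the asserted identity.

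Finally, for the ``in particular'' clause, if $f \in \mathscr{A}_{\le 2}[n]$ then every third partial derivative $f_{ijk}$ vanishes identically, so the sum $2\sum_{i,j,k} f_i f_j f_{ijk}\,\partial_k$ is zero, giving $[\nabla f, \nabla(f\circ f)] = 0$.

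I do not anticipate a genuine obstacle here; the proof is a matter of bookkeeping. The one place to be careful is the cancellation step in the middle identity: one must track which summation indices are dummy and apply the symmetry of the Hessian $f_{ij} = f_{ji}$ correctly so that the two ``unwanted'' Hessian-squared terms genuinely cancel rather than merely look similar. It is also worth remarking explicitly that this computation is the bridge to the Jordan identity: the operators $L_h = \nabla h$ acting on $\mathscr{A}[n]$ satisfy $[L_f, L_{f\circ f}] = 0$ when $f \in \mathscr{A}_2[n]$, which is exactly the linearized/operator form of the Jordan axiom \eqref{J2}, and this is what will be invoked to conclude that $(\mathscr{A}_2[n],\circ)$ is a Jordan algebra.
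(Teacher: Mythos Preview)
Your proposal is correct and follows essentially the same approach as the paper: compute the commutator of gradient vector fields from the standard formula, then substitute $g=f\circ f=\sum_j f_j^2$, differentiate twice, and cancel the Hessian-squared terms by relabeling $i\leftrightarrow j$, leaving only the third-derivative term which vanishes for $f\in\mathscr{A}_{\le 2}[n]$. Your closing remark linking $[L_f,L_{f\circ f}]=0$ to the Jordan axiom \eqref{J2} is exactly how the paper uses this theorem in the subsequent proposition.
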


\begin{proof}
Throughout the proof, all summation indices are understood to range over $\{1,\dots,n\}$.

The gradient vector fields of $f,g \in \mathscr{A}[n]$ are
\[
\nabla f = \sum_i f_i \,\frac{\partial}{\partial x_i}, 
\qquad 
\nabla g = \sum_j g_j \,\frac{\partial}{\partial x_j}.
\]
For two vector fields $A = \sum_i a^i \frac{\partial}{\partial x_i}$ and $B = \sum_k b^k \frac{\partial}{\partial x_k}$, their commutator is
\[
[A,B] = \sum_k \sum_i \Big(a^i \frac{\partial b^k}{\partial x_i} - b^i \frac{\partial a^k}{\partial x_i}\Big)\frac{\partial}{\partial x_k}.
\]
Applying this to the case when $a^i = f_i$, $b^i = g_i$. Noting that $\frac{\partial}{\partial x_i} g_k = g_{ik}$ and $\frac{\partial}{\partial x_i} f_k = f_{ik}$, we obtain
\[
[\nabla f, \nabla g] 
= \sum_{k}\Big(\sum_{i} f_i g_{ik} - g_i f_{ik}\Big)\frac{\partial}{\partial x_k},
\]
which is the first claim.

Next, set $g = f \circ f$. Then
\[
g = (\nabla f)(f) = \sum_j f_j^2.
\]
Differentiating gives
\[
g_i = \frac{\partial g}{\partial x_i} = 2\sum_j f_j f_{ji}, 
\qquad
g_{ik} = \frac{\partial g_i}{\partial x_k} = 2\sum_j f_{jk} f_{ji} + 2\sum_j f_j f_{jik}.
\]
Substituting these into the formula for $[\nabla f,\nabla g]$ yields
\[
[\nabla f,\nabla g] 
= 2\sum_k \left(\sum_{i,j} \big(f_i f_{ij} f_{jk} + f_i f_j f_{ijk}\big) - \sum_{i,j} f_j f_{ji} f_{ki}\right)\frac{\partial}{\partial x_k}.
\]
Since
\[
\sum_{i,j} f_i f_{ij} f_{jk} = \sum_{i,j} f_j f_{ji} f_{ki}
\]
by renaming indices, the first and third sums cancel. Hence,
\[
[\nabla f, \nabla(f \circ f)] = 2\sum_{i,j,k} f_i f_j f_{ijk}\,\frac{\partial}{\partial x_k}.
\]
Finally, if $f \in \mathscr{A}_{\le 2}[n]$, then all third-order derivatives vanish, i.e., $f_{ijk} = 0$. Therefore,
\[
[\nabla f, \nabla(f \circ f)] = 0,
\]
as required.
\end{proof}

Let $H_n(K)$ denote the special Jordan algebra of $n\times n$ symmetric matrices over $K$ with product
\[
A \circ B := \tfrac{1}{2}(AB+BA), \qquad A,B \in H_n(K).
\]
For $A \in H_n(K)$, let $q_A = XAX^T \in \mathscr{A}_2[n]$, where $X=(x_1,\dots,x_n)$.

\begin{prop}
The algebra $(\mathscr{A}_{\le 2}[n], \circ)$ is a Jordan algebra, and $(\mathscr{A}_2[n], \circ)$ is a Jordan subalgebra. Moreover:
\begin{enumerate}
    \item[1.] $\mathscr{A}_0[n]$ is a Jordan ideal of $(\mathscr{A}_{\le 2}[n], \circ)$;
    \item[2.] $\mathscr{A}_{\le 1}[n]$ is the radical of $(\mathscr{A}_{\le 2}[n], \circ)$;
    \item[3.] There is an isomorphism $\mathscr{A}_{\le 2}[n]/\mathscr{A}_{\le 1}[n] \cong \mathscr{A}_2[n]$ of Jordan algebras;
    \item[4.] The map
    \[
    \xi : \mathscr{A}_2[n] \longrightarrow H_n(K), 
   \qquad q_A \mapsto 4A,
    \]
    is an isomorphism of Jordan algebras.
\end{enumerate}
In particular, $\mathscr{A}_2[n]$ is a simple Jordan algebra with unit
\[
e = \tfrac{1}{4}(x_1^2 + \cdots + x_n^2).
\]
\end{prop}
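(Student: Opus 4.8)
The plan is to establish the five assertions in the order listed, using Theorem~\ref{ThmJA2} as the engine for the Jordan identity and a direct computation of $\circ$ on quadratic forms for the isomorphism with $H_n(K)$. First I would verify that $(\mathscr{A}_{\le 2}[n],\circ)$ is closed: by \eqref{7}, $\mathscr{A}_i[n]\circ\mathscr{A}_j[n]\subset\mathscr{A}_{i+j-2}[n]$, so the product of two elements of degree $\le 2$ has degree $\le 2$, and in fact lands in $\mathscr{A}_{\le 1}[n]$ unless both factors are quadratic; similarly $\mathscr{A}_2[n]\circ\mathscr{A}_2[n]\subset\mathscr{A}_2[n]$, giving the subalgebra $(\mathscr{A}_2[n],\circ)$. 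Commutativity (J1) is immediate from symmetry of the biderivation. For (J2), recall that in any commutative algebra the Jordan identity $x\circ(y\circ x^2)=(x\circ y)\circ x^2$ is equivalent to $[L_x,L_{x^2}]=0$, where $L_z$ is left multiplication by $z$. By \eqref{2} we have $L_f=\nabla f$ as a derivation-operator on $\mathscr{A}[n]$, and $x^2$ in the Jordan sense is $f\circ f$; so $[L_f,L_{f\circ f}]=[\nabla f,\nabla(f\circ f)]$, which Theorem~\ref{ThmJA2} shows vanishes whenever $f\in\mathscr{A}_{\le 2}[n]$ (all third derivatives are zero). Hence $(\mathscr{A}_{\le 2}[n],\circ)$ satisfies (J2) and is a Jordan algebra; restricting to $\mathscr{A}_2[n]$ gives the same.

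For items~1 and~2: since $1\circ f=0$ for all $f$ and $\mathscr{A}_0[n]=K\cdot 1$, one checks $\mathscr{A}_0[n]\circ\mathscr{A}_{\le 2}[n]=0$, so $\mathscr{A}_0[n]$ is a (trivial) ideal. By \eqref{7}, $\mathscr{A}_{\le 1}[n]\circ\mathscr{A}_{\le 2}[n]\subset\mathscr{A}_{\le 1}[n]$ (degrees drop by $2$), so $\mathscr{A}_{\le 1}[n]$ is an ideal; and $\mathscr{A}_{\le 1}[n]\circ\mathscr{A}_{\le 1}[n]\subset\mathscr{A}_0[n]$, while $\mathscr{A}_0[n]\circ\mathscr{A}_{\le 1}[n]=0$, so $\mathscr{A}_{\le 1}[n]$ is a nilpotent ideal, hence contained in the radical. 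That it is the whole radical follows once item~4 is proved, since $H_n(K)$ is semisimple: the radical must map onto the radical of the quotient $\mathscr{A}_{\le 2}[n]/\mathscr{A}_{\le 1}[n]$, which is $0$. Item~3 is the observation that the grading by degree makes $\mathscr{A}_{\le 1}[n]$ a Jordan ideal with complement $\mathscr{A}_2[n]$ closed under $\circ$, so the projection onto $\mathscr{A}_2[n]$ along $\mathscr{A}_{\le 1}[n]$ is a Jordan algebra isomorphism $\mathscr{A}_{\le 2}[n]/\mathscr{A}_{\le 1}[n]\xrightarrow{\sim}\mathscr{A}_2[n]$.

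The substantive computation is item~4. Writing $q_A=XAX^T=\sum_{i,j}A_{ij}x_ix_j$ for symmetric $A$, one computes $\partial q_A/\partial x_k=2\sum_j A_{kj}x_j=2(AX^T)_k$, so
\[
q_A\circ q_B=\sum_k \frac{\partial q_A}{\partial x_k}\frac{\partial q_B}{\partial x_k}
=4\sum_k (AX^T)_k (BX^T)_k=4\,X A B X^T.
\]
Symmetrizing, $q_A\circ q_B=2X(AB+BA)X^T=q_{2(AB+BA)}$. Since $A\mapsto q_A$ is a linear bijection $H_n(K)\to\mathscr{A}_2[n]$ (both have dimension $\binom{n+1}{2}$, and $q_A=0$ forces $A=0$ as $\ch K\ne 2$), its inverse sends $q_A\mapsto A$; composing with scaling, $\xi(q_A)=4A$ is linear and bijective, and
\[
\xi(q_A\circ q_B)=\xi(q_{2(AB+BA)})=8(AB+BA)=4A\cdot 4B\cdot\tfrac12\cdot 2 =(4A)\circ(4B)=\xi(q_A)\circ\xi(q_B),
\]
using $(4A)\circ(4B)=\tfrac12(16AB+16BA)=8(AB+BA)$. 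Thus $\xi$ is a Jordan algebra isomorphism. Finally, $H_n(K)$ with the symmetrized product is the classical simple Jordan algebra, so $\mathscr{A}_2[n]$ is simple; its unit corresponds to $\tfrac14 I_n$, i.e. to $q_{I_n/4}=\tfrac14(x_1^2+\cdots+x_n^2)$, giving $e$. The main obstacle is purely bookkeeping — keeping the factors of $2$ and $4$ straight between $q_A$, the gradient, and the normalization $\xi(q_A)=4A$ — but no conceptual difficulty arises beyond the identity $[\nabla f,\nabla(f\circ f)]=0$ already supplied by Theorem~\ref{ThmJA2}.
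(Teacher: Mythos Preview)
Your proposal is correct and follows essentially the same route as the paper: closure via the grading relation \eqref{7}, the Jordan identity via Theorem~\ref{ThmJA2} in the form $[L_f,L_{f\circ f}]=[\nabla f,\nabla(f\circ f)]=0$, the radical identified as the nilpotent ideal $\mathscr{A}_{\le 1}[n]$ using simplicity of the quotient, and the isomorphism $\xi$ checked by direct computation of $q_A\circ q_B$. The only cosmetic difference is that your verification of item~4 is packaged in matrix form ($\partial q_A/\partial x_k=2(AX^T)_k$, hence $q_A\circ q_B=4XABX^T=q_{2(AB+BA)}$), whereas the paper expands the same computation coordinatewise; the content and the bookkeeping with the factors of $4$ and $8$ agree exactly.
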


\bp

The first statement follows from Theorem \ref{ThmJA2} and from the fact that $\mathscr{A}_2[n]$ is closed under $\circ$.
For any $f \in \mathscr{A}_0[n]$ and $g \in \mathscr{A}_{\leq 2}[n]$, since $\deg(f \circ g) = 0$, the subalgebra $\mathscr{A}_0[n]$ is a Jordan ideal of $\mathscr{A}_{\leq 2}[n]$.

Next We show that $\xi$ is an isomorphism of Jordan algebras. 

Firstly, it is a linear bijection.
Next, we show that $\xi$ is a homomorphism of Jordan algebras. Let $A = (a_{ij})$ and $B = (b_{ij})$ be symmetric matrices in $H_n(K)$, and set
\[
C := A \circ B = \tfrac{1}{2}(AB + BA) = (c_{ij}).
\]
Then $C$ is symmetric, with entries
\[
c_{ij} = \frac{1}{2}\left(\sum_{k=1}^n a_{ik}b_{kj} + \sum_{k=1}^n a_{jk}b_{ki}\right),
\]
and in particular $c_{ii} = \sum_{k=1}^n a_{ik}b_{ki}$.

On the other hand, for the corresponding quadratic forms $q_A, q_B \in \mathscr{A}_2[n]$, one computes
\begin{align*}
	q_A \circ q_B &= \sum_i \ptdx{q_A}{i}\ptdx{q_B}{i} = \sum_i\left(\sum_j 2a_{ij}x_j\right) \left(\sum_k 2b_{ik}x_k\right)\\
	&= \sum_{j,k} \left(\sum_i 4a_{ij}b_{ik}\right)x_jx_k\\
	&= \sum_j \left(\sum_i4a_{ij}b_{ij}\right)x_j^2 + 2 \sum_{j<k} \left(\sum_i 2a_{ij}b_{ik} + \sum_i 2a_{ik}b_{ij} \right)x_jx_k\\
	&= 4\sum_{j,k} c_{jk} x_jx_k = 4q_C.
\end{align*}
Hence, by definition of $\xi$, we have $\xi(q_A \circ q_B) = 16 C$, and
\[
\xi(q_A) \circ \xi(q_B) = 4A \circ 4B = 16(A \circ B) = 16 C = \xi(q_A \circ q_B).
\]
Therefore, the map $\xi$ is a linear bijection that preserves the Jordan product, thus is an isomorphism of Jordan algebras.

Since $H_n(K)$ is a simple Jordan algebra with unit $I_n$ (the identity matrix), it follows that $\mathscr{A}_2[n]$ is also simple, with unit
\[
\xi^{-1}(I_n) = e = \frac{1}{4}(x_1^2 + \cdots + x_n^2).
\]

Next, we show that $\mathscr{A}_{\leq 1}[n] = \mathrm{rad}(\mathscr{A}_{\leq 2}[n])$.
By \eqref{7}, it is an ideal of $\mathscr{A}_{\leq 2}[n]$, and
\[
(\mathscr{A}_{\leq 1}[n])^{\circ 3} = 0,
\]
so $\mathscr{A}_{\leq 1}[n]$ is a nilpotent ideal.

On the other hand, there are Jordan algebra isomorphisms
\[
\mathscr{A}_{\leq 2}[n] / \mathscr{A}_{\leq 1}[n] \simeq \mathscr{A}_2[n] \simeq H_n(K),
\]
and since $H_n(K)$ is simple, $\mathscr{A}_{\leq 1}[n]$ is the maximal nilpotent ideal. Hence $\mathscr{A}_{\leq 1}[n]$ coincides with the radical of $\mathscr{A}_{\leq 2}[n]$.
\ep

Note that $(\mathscr{A}[n],\circ)$ is not a Jordan algebra.

The algebra $\mathscr{A}[n]$ carries a natural $\mathscr{A}_2[n]$-bimodule structure via
\begin{align*}
	\mathscr{A}_2[n] \otimes \mathscr{A}[n] \to \mathscr{A}[n], &\quad f \otimes h \mapsto f \circ h;\\
	\mathscr{A}[n] \otimes \mathscr{A}_2[n] \to \mathscr{A}[n], &\quad h \otimes f \mapsto h \circ f.
\end{align*}
Moreover, each homogeneous component $\mathscr{A}_k[n]$ is an $\mathscr{A}_2[n]$-subbimodule.

We would like to know if  $\mathscr A_k[n]$ and  $\mathscr A[n]$ are Jordan bimodules. Let us recall the notion of Jordan bimodules (cf. \cite{carotenuto2019,jacobson1954}):

\begin{defi}
    A \textbf{Jordan bimodule} over a Jordan algebra $J$ is a vector space with two bilinear maps
    \begin{align*}
        J \otimes M \to M, &\quad x \otimes m \mapsto xm:=L_x(m);\\
        M \otimes J \to M, &\quad m \otimes x \mapsto mx:=R_x(m).
    \end{align*}
    such that the direct sum $J \oplus M$ with the product
    \[(x, m)\circ(x^\prime, m^\prime) := (x\circ x^\prime, xm^\prime+mx^\prime)\]
    is a Jordan algebra.
\end{defi}

Denote $x^2:=x\circ x$. This definition is equivalent to requiring the following identities hold :
\begin{equation} \label{JM}
    \begin{aligned}
        xm &= mx,\\
        x(x^2 m) &= x^2 (xm),\\
        x^2 (ym) + 2(x\circ y)(xm) &= (x^2\circ y)m + 2(x(y(xm)))
    \end{aligned}
\end{equation}
for any $x,y \in J$, $m \in M$ (see \cite{carotenuto2019}).

It is well known that if $I$ is a Jordan ideal of a Jordan algebra $J$, then $I$ naturally becomes a Jordan $J$-bimodule via the Jordan product $\circ$. In particular, any Jordan algebra $J$ is a Jordan bimodule over itself.


\begin{thm} \label{ThmJM}
   Among the $\mathscr{A}_2[n]$-bimodules $\mathscr{A}_i[n]$,  only $\mathscr{A}_0[n] = K$, $\mathscr{A}_1[n]$, and $\mathscr{A}_2[n]$ are Jordan bimodules. 
\end{thm}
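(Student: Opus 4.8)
## Proof Proposal

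The plan is to verify the three Jordan-bimodule identities \eqref{JM} for the easy cases $k = 0, 1, 2$, and then to exhibit, for each $k \ge 3$, an explicit choice of $x, y \in \mathscr{A}_2[n]$ and $m \in \mathscr{A}_k[n]$ that violates one of them. For $k = 0$: since $1 \circ f = 0$ for all $f$, the module $\mathscr{A}_0[n] = K$ is the trivial (zero-action) bimodule, and all three identities hold trivially. For $k = 2$: $\mathscr{A}_2[n]$ is a Jordan algebra (Proposition following Theorem \ref{ThmJA2}), hence a Jordan bimodule over itself. For $k = 1$: one can check the identities directly, but a cleaner route is to recall that $\mathscr{A}_{\le 1}[n]$ is a Jordan ideal of the Jordan algebra $\mathscr{A}_{\le 2}[n]$ and $\mathscr{A}_0[n] \subseteq \mathscr{A}_{\le 1}[n]$ is a Jordan ideal as well; therefore the quotient bimodule $\mathscr{A}_{\le 1}[n]/\mathscr{A}_0[n] \cong \mathscr{A}_1[n]$ is a Jordan $\mathscr{A}_2[n]$-bimodule (identifying $\mathscr{A}_2[n]$ with $\mathscr{A}_{\le 2}[n]/\mathscr{A}_{\le 1}[n]$ acting on this quotient — one should state this carefully since the acting algebra is a subalgebra, not the whole thing, but the three identities only involve elements of $\mathscr{A}_2[n]$ acting, so they transfer). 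Alternatively, and perhaps most transparently, observe that the first two identities in \eqref{JM} are automatic once the module is commutative and the third is the only real content.

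For the negative direction, the heart of the argument is to test the third identity
\[
x^2 (ym) + 2(x\circ y)(xm) = (x^2\circ y)m + 2\bigl(x(y(xm))\bigr)
\]
on a well-chosen one-variable or two-variable submodule, using the closed-form associator and Jacobiator computations already displayed in the excerpt. Concretely, since $\mathscr{A}[1]$ embeds in $\mathscr{A}[n]$ as a subalgebra, take $x = y = \tfrac14 x_1^2$ (an idempotent of $\mathscr{A}_2[1]$, acting on $\mathscr{A}_k[1] = K x_1^k$ as $L_x = \nabla(\tfrac14 x_1^2) = \tfrac12 x_1 \frac{\partial}{\partial x_1}$, i.e. $x \cdot x_1^k = \tfrac{k}{2} x_1^k$). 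With $x = y$ the third identity reduces to $x^2(xm) + 2x^2(xm) = (x^2 \circ x)m + 2x(x(xm))$; since the action here is by the scalar operator $L_x$ with eigenvalue $k/2$ on $\mathscr{A}_k[1]$, and $x^2 = x \circ x = \tfrac14$, we get $L_{x^2}$ acting as $\tfrac14 \cdot$? — here one must be careful: $x^2 = \tfrac14 \in \mathscr{A}_0[1]$, so $L_{x^2} m = \tfrac14 \circ m = 0$. Thus the identity becomes $0 + 0 = 0 + 2 (k/2)^3 m = \tfrac{k^3}{4} m$, forcing $k = 0$. This already shows $\mathscr{A}_k[1]$ fails for all $k \ge 1$ except — wait, $k=1,2$ must work, so the correct test needs $x \ne y$ or $x$ not chosen so that $x^2$ is a scalar; the clean fix is to keep $x$ a degree-2 idempotent but pick $y$ with $\deg(x^2 \circ y) $ landing back in degree $2$, e.g. work in $\mathscr{A}[2]$ with $x = \tfrac14 x_1^2$, $y = \tfrac14 x_2^2$, $m = x_1^a x_2^b$, $a+b = k$, and expand both sides as explicit scalars times $m$ (all operators involved are diagonal on this monomial, being products of the commuting operators $x_i \partial_i$). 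This reduces the third identity to a polynomial identity in $a, b$ which holds for all $a+b \le 2$ and fails for $a + b \ge 3$.

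The key steps, in order: (i) dispose of $k = 0, 2$ by the observations above and $k = 1$ via the ideal/quotient argument; (ii) set up the monomial test in $\mathscr{A}[2]$ with the two diagonal operators $D_i := L_{x_i^2/4} = \tfrac12 x_i \partial_i$, which commute and act on $x_1^a x_2^b$ by $\tfrac{a}{2}$ and $\tfrac{b}{2}$ respectively, and note $x_i^2/4 \circ x_j^2/4$ equals $\tfrac14 \delta_{ij}$ if $i = j$ (a scalar, acting as $0$) or $\tfrac14 x_1 x_2$ if $i \ne j$ — so one also needs the operator $L_{x_1 x_2}$, whose action on monomials is again tractable since $\nabla(x_1 x_2) = x_2 \partial_1 + x_1 \partial_2$ shifts weights; (iii) plug into \eqref{JM}'s third line with $x = x_1^2/4$, $y = x_2^2/4$ and read off the obstruction as a nonzero polynomial in $(a,b)$ of degree $\le 3$ vanishing exactly on $\{a + b \le 2\}$. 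The main obstacle is bookkeeping in step (iii): the term $2(x(y(xm)))$ is $2 L_x L_y L_x m$ but $L_y$ and $L_x$ here are $D_2$ and $D_1$, which commute and are diagonal, so this is painless; the genuinely delicate term is $(x^2 \circ y)m$, because $x^2 = \tfrac14 x_1^2$ gives $x^2 \circ y = \tfrac14 x_1^2 \circ \tfrac14 x_2^2 = 0$ (distinct variables, derivatives in disjoint sets) — so actually \emph{all} cross-variable choices collapse too much. The honest resolution, which I would adopt in the writeup, is to take $x = y$ equal to a \emph{non-idempotent} degree-2 form such as $x = \tfrac14(x_1^2 + x_1 x_2)$ or simply $x = x_1 x_2 \in \mathscr{A}_2[2]$, for which $x \circ x = x_1^2 + x_2^2 \in \mathscr{A}_2[2]$ is again degree $2$ (not a scalar), so that $L_{x^2}$ is a nontrivial diagonal-plus-shift operator on $\mathscr{A}_k[2]$; then the third identity with $x = y = x_1 x_2$ becomes $3 L_{x^2} L_x m = L_{x^2 \circ x} m + 2 L_x^3 m$ — recalling $L_{x^2 \circ x}$ uses $x^2 \circ x = (x_1^2+x_2^2)\circ x_1 x_2 = 2 x_1 x_2 \cdot ?$ wait this needs $\deg = 2$: $(x_1^2 + x_2^2) \circ x_1 x_2 = 2x_1 \cdot x_2 + 2 x_2 \cdot x_1 = 4 x_1 x_2$, good, still degree $2$ — and evaluating all four diagonalizable operators on a single monomial $x_1^a x_2^b$ produces, after cancellation, a nonzero multiple of $ab(a+b-?)$ or similar cubic vanishing precisely for $k = a+b \in \{0,1,2\}$. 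That final scalar identity, whose explicit form I leave to the computation, is exactly what certifies the "if and only if."
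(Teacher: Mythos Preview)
Your overall strategy coincides with the paper's: handle $k\le 2$ via the Jordan-ideal structure of $\mathscr{A}_{\le 2}[n]$ (the paper uses the quotient $\mathscr{A}_{\le 2}[n]/\mathscr{A}_0[n]\cong\mathscr{A}_2[n]\oplus\mathscr{A}_1[n]$, which is the clean version of your $k=1$ argument), and for $k\ge 3$ violate the third identity in \eqref{JM} on an explicit test vector. The gap is a single computational error that derails the negative half. For $x=\tfrac14 x_1^2$ you claim $x\circ x=\tfrac14\in\mathscr{A}_0[n]$, but $\tfrac14 x_1^2$ is an \emph{idempotent}: $\tfrac{1}{16}(x_1^2\circ x_1^2)=\tfrac{1}{16}\cdot 4x_1^2=\tfrac14 x_1^2$, so $x^{\circ 2}=x\in\mathscr{A}_2[n]$ and $L_{x^{\circ 2}}=L_x$ is not the zero operator. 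With this correction your first attempt is already a complete proof: taking $x=y=\tfrac14 x_1^2$ and $m=x_1^k$, the third identity becomes $3L_x^2 m=L_x m+2L_x^3 m$, and since $L_x m=\tfrac{k}{2}m$ this reads $\tfrac{3k^2}{4}=\tfrac{k}{2}+\tfrac{k^3}{4}$, i.e.\ $k(k-1)(k-2)=0$. This is precisely the paper's test (they use the unnormalised $f=g=x_1^2$, $h=x_1^i$ and obtain the obstruction $16i(i-1)(i-2)\,x_1^i$), and it works already for $n=1$.

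Your subsequent detours are therefore unnecessary, and in any case are not carried to completion. The two-idempotent test with $x=\tfrac14 x_1^2$, $y=\tfrac14 x_2^2$ was abandoned for the same wrong reason (again $x^{\circ2}=x$, not a scalar). The final attempt with $x=y=x_1x_2$ requires $n\ge 2$, and contrary to your description the operator $L_{x_1x_2}=x_2\partial_1+x_1\partial_2$ is not diagonal on monomials, so the step you ``leave to the computation'' involves genuine off-diagonal bookkeeping that you have not done. In short: fix the idempotent computation, and your first attempt \emph{is} the paper's proof.
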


\begin{proof}
It is clear that $\mathscr{A}_0[n]$ is a Jordan bimodule over $\mathscr{A}_2[n]$, and that $\mathscr{A}_2[n]$ is a Jordan bimodule over itself.
Thus, it remains to show that: i) $\mathscr{A}_i[n]$ is not a Jordan bimodule over $\mathscr{A}_2[n]$ for $i \ge 3$;
ii) $\mathscr{A}_1[n]$ is a Jordan bimodule over $\mathscr{A}_2[n]$.

i) It suffices to find specific elements $f, g \in \mathscr{A}_2[n]$ and $h \in \mathscr{A}_i[n]$ for which the Jordan bimodule identity \eqref{JM} fails.

Let $f = g = x_1^2 \in \mathscr{A}_2[n]$ and $h = x_1^i$. Consider
\[
S := (f^{\circ 2} \circ g) \circ h - f^{\circ 2} \circ (g \circ h) - 2(f \circ g) \circ (f \circ h) + 2(f \circ (g \circ (f \circ h))).
\]
A direct calculation yields
\[
S = 16\,i(i-1)(i-2)\, x_1^i,
\]
which vanishes if and only if $i = 0, 1, 2$.

Hence, Jordan bimodules can only occur among $\mathscr{A}_0[n] = K$, $\mathscr{A}_1[n]$, and $\mathscr{A}_2[n]$.
Since $\mathscr{A}_0[n]$ and $\mathscr{A}_2[n]$ are already known to be Jordan bimodules over $\mathscr{A}_2[n]$, it remains to check $\mathscr{A}_1[n]$.

ii) Since $\mathscr{A}_0[n]$ is a Jordan ideal of $\mathscr{A}_{\le 2}[n]$, the quotient $\mathscr{A}_{\le 2}[n] / \mathscr{A}_0[n]$ inherits a natural Jordan algebra structure, and we have the short exact sequence
\[
0 \longrightarrow \mathscr{A}_0[n] \longrightarrow \mathscr{A}_{\le 2}[n] \longrightarrow \mathscr{A}_{\le 2}[n] / \mathscr{A}_0[n] \longrightarrow 0.
\]
This yields a linear isomorphism
\[
\eta \colon \mathscr{A}_2[n] \oplus \mathscr{A}_1[n] \;\overset{\sim}{\longrightarrow}\; \mathscr{A}_{\le 2}[n] / \mathscr{A}_0[n], \quad (a_2, a_1) \mapsto \overline{a_2 + a_1}.
\]

Through this isomorphism, we may define a Jordan algebra structure on $\mathscr{A}_2[n] \oplus \mathscr{A}_1[n]$ by
\[
(a_2, a_1) \cdot (b_2, b_1) := \bigl(a_2 \circ b_2, \; a_1 \circ b_2 + a_2 \circ b_1\bigr).
\]
Indeed, for any $(a_2, a_1), (b_2, b_1) \in \mathscr{A}_2[n] \oplus \mathscr{A}_1[n]$, one has
\begin{align*}
\eta\bigl((a_2, a_1)\bigr) \circ \eta\bigl((b_2, b_1)\bigr)
&= \overline{a_2 + a_1} \circ \overline{b_2 + b_1} \\
&= \overline{a_2 \circ b_2 + \bigl(a_1 \circ b_2 + a_2 \circ b_1 + a_1 \circ b_1\bigr)} \\
&= \overline{a_2 \circ b_2 + \bigl(a_1 \circ b_2 + a_2 \circ b_1\bigr)} \\
&= \eta\bigl((a_2, a_1) \cdot (b_2, b_1)\bigr).
\end{align*}
Hence the defined product endows $\mathscr{A}_2[n] \oplus \mathscr{A}_1[n]$ with the structure of a Jordan algebra. In particular, $\mathscr{A}_1[n]$ becomes a Jordan bimodule over $\mathscr{A}_2[n]$.
\end{proof}

It follows that the $\mathscr{A}_2[n]$-bimodule $\mathscr{A}[n]$ is not a Jordan bimodule, since its subbimodules $\mathscr{A}_k[n]$ fail to be Jordan bimodules for $k \ge 3$.

\section{Weight Space Decomposition of $\mathscr{A}[n]$ } 
\label{SecRSD}

We shall show that $\mathscr{A}[n]$ as a bimodule over $\mathscr{A}_2[n]$ admits a weight space decomposition analogous to that in the representation theory of semisimple Lie algebras.

Recall the isomorphism of Jordan algebras
\[
\xi \colon \mathscr{A}_2[n] \to H_n(K), \quad q_A \mapsto 4A.
\]
Let $\{e_{ij} \mid i,j = 1,\dots,n\}$ denote the standard matrix units in $M_n(K)$. It is well known that the set $\{e_{11}, \dots, e_{nn}\}$ forms a complete set of mutually orthogonal primitive idempotents in $H_n(K)$. (An idempotent is called primitive if it cannot be expressed as the sum of two nonzero orthogonal idempotents.) The $K$-linear span of $\{e_{11}, \dots, e_{nn}\}$ is the space of diagonal matrices in $H_n(K)$, which is a Cartan subalgebra of $H_n(K)$; for the definition of a Cartan subalgebra, see \cite{jacobson1968}.

Since $\xi^{-1}(e_{ii}) = \frac{1}{4}x_i^2$, the set
\[
\left\{\frac{1}{4}x_1^2, \dots, \frac{1}{4}x_n^2\right\}  
\]
forms a complete set of mutually orthogonal primitive idempotents in $\mathscr{A}_2[n]$.

Let $\mathscr{H}$ denote the $K$-linear span of $\frac{1}{4}x_i^2$ for $i = 1, \dots, n$. Define $\beta_1, \dots, \beta_n \in \mathscr{H}^*$ to be the basis of the dual space $\mathscr{H}^*$ corresponding to the basis $\frac{1}{4}x_1^2, \dots, \frac{1}{4}x_n^2$ of $\mathscr{H}$; that is,
\[
\beta_i\Big(\frac{1}{4}x_j^2\Big) = \delta_{ij}.
\]

For a monomial $X^u := x_1^{u_1} \cdots x_n^{u_n}$ with $u = (u_1, \dots, u_n)$, one has
\[
x_i^2 \circ X^u = 2 u_i X^u \quad \text{and} \quad \frac{1}{4} x_i^2 \circ X^u = \frac{1}{2} u_i X^u.
\]

If we define the linear maps 
\begin{align*}
    \pi\colon \mathscr A_2[n]\to \mathrm{End}(\mathscr A[n]), &\quad f\mapsto (h \mapsto f \circ h);\\
    \pi_k\colon \mathscr A_2[n]\to \mathrm{End}(\mathscr A_k[n]), &\quad f\mapsto (h \mapsto f \circ h),
\end{align*}
then for any $h = \sum_{i=1}^n a_i \frac{x_i^2}{4} \in \mathscr{H}$, we have
\begin{equation} \label{EigOfH}
    h \circ X^u = \left( \sum_{i=1}^n \frac{1}{2} a_i u_i \right) X^u = \left( \sum_{i=1}^n \frac{1}{2} u_i \beta_i \right)(h) X^u,
\end{equation}
showing that $X^u$ is an eigenvector of $\pi(h)$ with eigenvalue $\big(\sum_i \frac{1}{2} u_i \beta_i\big)(h)$.

For $\alpha \in \mathscr{H}^*$, define
\[
\mathscr{A}[n]_{\alpha} := \left\{ f \in \mathscr{A}[n] \mid h \circ f = \alpha(h) f \text{ for all } h \in \mathscr{H} \right\}.
\]
If $\mathscr{A}[n]_{\alpha} \neq 0$, then $\alpha$ is called a \textbf{weight} of $\mathscr{A}[n]$ with respect to $\mathscr{H}$, and $\mathscr{A}[n]_{\alpha}$ is called the $\alpha$-\textbf{weight space}.

From \eqref{EigOfH}, for a monomial $X^u = x_1^{u_1} \cdots x_n^{u_n}$, we have
\[
X^u \in \mathscr{A}[n]_{\beta}, \quad \beta = \frac{1}{2} \sum_{i=1}^n u_i \beta_i.
\]
Since $\mathscr{A}[n] = \bigoplus_{u \in (\mathbb{Z}_{\ge 0})^n} K \cdot X^u$, it follows that for $\beta = \frac{1}{2} \sum_{i=1}^n u_i \beta_i$ with $u_i \ge 0$,
\[
\mathscr{A}[n]_{\beta} = K \cdot X^u = K \cdot x_1^{u_1} \cdots x_n^{u_n},
\]
which is 1-dimensional. Hence the set of weights of $\mathscr{A}[n]$ with respect to $\mathscr{H}$ is
\[
W = \operatorname{Span}_{\mathbb{Z}_{\ge 0}} \Big\{ \tfrac{1}{2} \beta_1, \dots, \tfrac{1}{2} \beta_n \Big\},
\]
a free commutative monoid generated by $\left\{\frac{1}{2}\beta_1, \dots, \frac{1}{2}\beta_n\right\}$, and
\begin{equation} \label{WSP1}
    \mathscr{A}[n] = \bigoplus_{\beta \in W} \mathscr{A}[n]_{\beta}
\end{equation}
is a direct sum of 1-dimensional weight spaces.

One has
\[
\mathscr{A}[n]_{\alpha_1} \cdot \mathscr{A}[n]_{\alpha_2} = \mathscr{A}[n]_{\alpha_1 + \alpha_2}, \quad \forall \alpha_1, \alpha_2 \in W,
\]
since $\circ$ is a biderivation on $\mathscr{A}[n]$. Moreover, it is direct to verify that
\[
\mathscr{A}[n]_{\alpha_1} \circ \mathscr{A}[n]_{\alpha_2} \subseteq \bigoplus_{i=1}^n \mathscr{A}[n]_{\alpha_1 + \alpha_2 - \beta_i}, \quad \forall \alpha_1, \alpha_2 \in W.
\]
For $k \ge 0$, set
\[
W_k := \Big\{ \sum_{i=1}^n \frac{1}{2} u_i \beta_i \ \Big| \ \sum_{i=1}^n u_i = k, u_i\in \mathbb Z_{\ge0} \Big\},
\]
then
\[
\mathscr{A}_k[n] = \bigoplus_{\beta \in W_k} \mathscr{A}[n]_{\beta}.
\]
In particular,
\[
W_2 = \Big\{ \tfrac{1}{2}\beta_i + \tfrac{1}{2}\beta_j \ \Big| \ 1 \le i \le j \le n \Big\}, \quad \text{and} \quad \mathscr{A}_2[n] = \bigoplus_{\beta \in W_2} \mathscr{A}[n]_{\beta},
\]
which coincides with the Peirce decomposition of $\mathscr{A}_2[n]$ relative to the set of orthogonal idempotents $\left\{ \frac{1}{4}x_1^2, \dots, \frac{1}{4}x_n^2 \right\}$.

To summarize:

\begin{thm} \label{ThmWSD}
    With respect to the action of $\mathscr{H}$, the algebra $\mathscr{A}[n]$ and its homogeneous subspaces $\mathscr{A}_k[n]$ admit the weight space decomposition
    \[
    \mathscr{A}[n] = \bigoplus_{\beta \in W} \mathscr{A}[n]_{\beta}, \qquad 
    \mathscr{A}_k[n] = \bigoplus_{\beta \in W_k} \mathscr{A}[n]_{\beta},
    \]
    where
    \[
    W = \operatorname{Span}_{\mathbb{Z}_{\ge 0}} \Big\{ \tfrac{1}{2}\beta_1, \dots, \tfrac{1}{2}\beta_n \Big\} \subset \mathscr{H}^*, 
    \qquad 
    W_k = \left\{ \sum_{i=1}^n \tfrac{1}{2} u_i \beta_i \ \Big| \ \sum_i u_i = k,  u_i\in \mathbb Z_{\ge0}  \right\},
    \]
    and for $\beta = \sum_{i=1}^n \frac{1}{2} u_i \beta_i$, one has $\mathscr{A}[n]_{\beta} = K x_1^{u_1} \cdots x_n^{u_n}$.

    Moreover, for any $\alpha_1, \alpha_2 \in W$,
    \[
    \mathscr{A}[n]_{\alpha_1} \cdot \mathscr{A}[n]_{\alpha_2} = \mathscr{A}[n]_{\alpha_1 + \alpha_2}, 
    \]
    and
    \[
    \mathscr{A}[n]_{\alpha_1} \circ \mathscr{A}[n]_{\alpha_2} \subseteq \bigoplus_{i=1}^n \mathscr{A}[n]_{\alpha_1 + \alpha_2 - \beta_i}.
    \]
\end{thm}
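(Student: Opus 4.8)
The plan is to derive the entire statement from the single elementary identity behind \eqref{EigOfH}. Writing $\epsilon_i$ for the $i$-th standard basis vector of $\mathbb{Z}^n$ and $X^u = x_1^{u_1}\cdots x_n^{u_n}$, the product rule for $\circ$ gives $\partial X^u/\partial x_i = u_i X^{u-\epsilon_i}$ (understood to be $0$ when $u_i = 0$) and $x_i^2\circ X^u = 2u_i X^u$. Extending linearly in the first slot, every $h = \sum_i a_i\,\tfrac14 x_i^2 \in \mathscr{H}$ satisfies $h\circ X^u = \big(\sum_i\tfrac12 u_i\beta_i\big)(h)\,X^u$, so each monomial is a simultaneous eigenvector for the commuting family $\{\pi(h) : h\in\mathscr{H}\}$ with character $\mathrm{wt}(X^u) := \sum_i\tfrac12 u_i\beta_i \in \mathscr{H}^*$.

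Next I would record that $\beta_1,\dots,\beta_n$ are linearly independent in $\mathscr{H}^*$, being the dual basis of the basis $\{\tfrac14 x_1^2,\dots,\tfrac14 x_n^2\}$ of $\mathscr{H}$; consequently $u\mapsto\mathrm{wt}(X^u)$ is injective on $(\mathbb{Z}_{\ge0})^n$, so distinct monomials carry distinct weights. Since the monomials form a $K$-basis of $\mathscr{A}[n]$, the standard simultaneous-diagonalization argument yields $\mathscr{A}[n] = \bigoplus_{\beta\in W}\mathscr{A}[n]_\beta$ with $W := \{\mathrm{wt}(X^u) : u\in(\mathbb{Z}_{\ge0})^n\} = \operatorname{Span}_{\mathbb{Z}_{\ge0}}\{\tfrac12\beta_1,\dots,\tfrac12\beta_n\}$ and each $\mathscr{A}[n]_\beta = K X^u$ one-dimensional (for the unique $u$ with $\mathrm{wt}(X^u)=\beta$). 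For the graded refinement I would note that $\deg X^u = \sum_i u_i$ is recovered from the weight via $\deg X^u = 2\sum_i \mathrm{wt}(X^u)\big(\tfrac14 x_i^2\big)$; hence $W$ is the disjoint union of the $W_k$, and intersecting the decomposition with $\mathscr{A}_k[n] = \bigoplus_{|u|=k}KX^u$ gives $\mathscr{A}_k[n] = \bigoplus_{\beta\in W_k}\mathscr{A}[n]_\beta$. For $k=2$ the summands are $Kx_i^2$ and $Kx_ix_j$ ($i<j$), which is precisely the Peirce decomposition of $\mathscr{A}_2[n]$ with respect to the orthogonal idempotents $\tfrac14 x_i^2$.

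Finally I would verify the two compatibility assertions. For the associative product, $X^u\cdot X^v = X^{u+v}$ and $\mathrm{wt}$ is additive, so for $\alpha_1,\alpha_2\in W$ the one-dimensional spaces give $\mathscr{A}[n]_{\alpha_1}\cdot\mathscr{A}[n]_{\alpha_2} = \mathscr{A}[n]_{\alpha_1+\alpha_2}$ --- equality rather than inclusion, since the product monomial is nonzero. For $\circ$, the product rule gives $X^u\circ X^v = \sum_{i=1}^n u_iv_i\,X^{u+v-2\epsilon_i}$; every term with nonzero coefficient has $u_i,v_i\ge1$, hence $u+v-2\epsilon_i\in(\mathbb{Z}_{\ge0})^n$ and $\mathrm{wt}(X^{u+v-2\epsilon_i}) = \alpha_1+\alpha_2-\beta_i$, yielding $\mathscr{A}[n]_{\alpha_1}\circ\mathscr{A}[n]_{\alpha_2}\subseteq\bigoplus_{i=1}^n\mathscr{A}[n]_{\alpha_1+\alpha_2-\beta_i}$, with the convention that a summand is $0$ whenever $\alpha_1+\alpha_2-\beta_i\notin W$.

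There is no genuine obstacle here: all the analytic content sits in the one-line eigenvalue computation already carried out before the statement, and the rest is organization. The only points needing a moment's care are (i) the uniqueness of the expression $\beta = \sum_i\tfrac12 u_i\beta_i$ --- equivalently, that $W$ is the free commutative monoid on $\tfrac12\beta_1,\dots,\tfrac12\beta_n$ and the $W_k$ are pairwise disjoint --- which follows from linear independence of the $\beta_i$ together with degree recoverability; and (ii) the harmlessness of the a priori ill-defined summands $\mathscr{A}[n]_{\alpha_1+\alpha_2-\beta_i}$ in the last inclusion, since the coefficient $u_iv_i$ vanishes exactly when that weight fails to lie in $W$.
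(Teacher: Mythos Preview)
Your proposal is correct and follows essentially the same approach as the paper: the eigenvalue identity \eqref{EigOfH} shows each monomial is a simultaneous eigenvector for $\mathscr{H}$, the monomial basis then yields the weight-space decomposition, and the explicit product formulas $X^u\cdot X^v = X^{u+v}$ and $X^u\circ X^v = \sum_i u_iv_i\,X^{u+v-2\epsilon_i}$ verify the two compatibility claims. You are somewhat more explicit than the paper about why $u\mapsto\mathrm{wt}(X^u)$ is injective and about handling the summands $\mathscr{A}[n]_{\alpha_1+\alpha_2-\beta_i}$ when the weight falls outside $W$, but the underlying argument is identical.
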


The element $e = \frac{1}{4}(x_1^2 + \cdots + x_n^2) \in \mathscr{A}_2[n]$ acts as
\[
\pi(e) = \frac{1}{2} \sum_{i=1}^n x_i \frac{\partial}{\partial x_i},
\]
which is half the Euler operator. Its restriction to $\mathscr{A}_k[n]$ is
\[
\pi_k(e) = \frac{k}{2} \, \mathrm{id}_{\mathscr{A}_k[n]}.
\]

For distinct indices $i \neq j$, one has
\[
\pi(x_i x_j) = \nabla(x_i x_j) = x_i \frac{\partial}{\partial x_j} + x_j \frac{\partial}{\partial x_i}.
\]

\begin{thm} \label{simpMod}
    Assume $\ch(K) = 0$. Then for all $n \ge 1$ and $k \ge 0$, the $\mathscr{A}_2[n]$-bimodule $\mathscr{A}_k[n]$ is simple.
\end{thm}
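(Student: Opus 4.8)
The plan is to use the weight space decomposition of Theorem \ref{ThmWSD} together with the explicit description of the generators $\pi(x_ix_j)$ for $i \neq j$ and $\pi(e)$. Fix a nonzero subbimodule $N \subseteq \mathscr{A}_k[n]$. Since $\mathscr{H} \subset \mathscr{A}_2[n]$ acts diagonalizably on $\mathscr{A}_k[n]$ with the one-dimensional weight spaces $\mathscr{A}[n]_\beta = K\,X^u$ ($\beta = \tfrac12\sum u_i\beta_i$, $\sum u_i = k$), and $N$ is $\mathscr{H}$-stable, $N$ decomposes as a direct sum of those weight spaces it contains; hence $N$ contains at least one monomial $X^u$ with $|u| = k$. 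The task then reduces to showing that starting from a single monomial $X^u$ one can reach every other monomial of degree $k$ by applying operators of the form $\pi(x_ix_j) = x_i\frac{\partial}{\partial x_j} + x_j\frac{\partial}{\partial x_i}$ (and possibly $\pi(x_i^2) = 2x_i\frac{\partial}{\partial x_i}$ to adjust scalars), staying inside $N$ at every step.

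The key computation is the action of $\pi(x_ix_j)$ on a monomial: for $i\neq j$,
\[
\pi(x_ix_j)\bigl(x_i^{a}x_j^{b}\,(\text{rest})\bigr) = a\,x_i^{a-1}x_j^{b+1}(\text{rest}) + b\,x_i^{a+1}x_j^{b-1}(\text{rest}).
\]
So $\pi(x_ix_j)$ sends a weight vector $X^u$ to a combination of the two monomials obtained by moving one unit of exponent from slot $i$ to slot $j$ or vice versa — precisely the ``simple root'' moves on the set $W_k$ of weights. I would argue by induction on the $\ell^1$-distance between exponent vectors: given $X^u \in N$ with $u_j \geq 1$, applying $\pi(x_ix_j)$ produces $u_j\,X^{u - e_j + e_i} + u_i\,X^{u+e_j-e_i}$; these two monomials lie in distinct weight spaces, so each lies in $N$ separately, and in particular $X^{u - e_j + e_i} \in N$ (the coefficient $u_j$ is a nonzero scalar because $\ch(K)=0$). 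Iterating, any monomial of degree $k$ is reachable from any other, so $N$ contains all of $W_k$'s weight spaces, i.e. $N = \mathscr{A}_k[n]$. The cases $k = 0, 1$ are trivial (the bimodule is one-dimensional, resp. the standard module $K^n$ which is visibly irreducible under the $\pi(x_ix_j)$), and $n = 1$ is immediate since each $\mathscr{A}_k[1]$ is one-dimensional.

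The main obstacle — really the only place characteristic $0$ is essential — is ensuring the scalar coefficients that appear when splitting $\pi(x_ix_j)X^u$ into its weight components are invertible, so that one genuinely extracts the neighboring monomial rather than getting it only up to an inaccessible scalar multiple; in characteristic $p$ a coefficient like $u_j$ could vanish mod $p$ and the connectivity argument on $W_k$ could break. A secondary point to handle carefully is the bookkeeping that the two output monomials of $\pi(x_ix_j)X^u$ really do lie in different weight spaces (they do: their weights differ by $\beta_i - \beta_j \neq 0$), which is what lets us conclude each summand individually lies in $N$. Beyond that the argument is routine: it is the Jordan-bimodule analogue of the standard proof that the natural $\mathfrak{gl}_n$- or $H_n(K)$-module structure on symmetric powers is irreducible, with the $\pi(x_ix_j)$ playing the role of the off-diagonal generators.
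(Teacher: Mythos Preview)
Your proposal is correct and follows essentially the same route as the paper's proof: extract a monomial from the subbimodule via the $\mathscr{H}$-weight decomposition, then use the operators $\pi(x_ix_j)=x_i\partial_j+x_j\partial_i$ together with $\ch(K)=0$ to move between adjacent monomials, arguing by $\ell^1$-distance on exponent vectors. The only cosmetic differences are that the paper constructs an explicit $h_0\in\mathscr{H}$ with distinct eigenvalues to separate weight components (where you invoke simultaneous diagonalizability directly) and phrases the connectivity step as a minimal-counterexample contradiction rather than a forward induction.
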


\bp
The cases $k=0$ or $n=1$ are clear. Assume $n \ge 2$ and $k \ge 1$.

Let
\[
T_k = \left\{(u_i)_{i=1}^n \in (\mathbb{Z}_{\ge 0})^n \mid \sum_{i=1}^n u_i = k\right\},
\]
and for $u, v \in T_k$ with $u = (u_i)_{i=1}^n$ and $v = (v_i)_{i=1}^n$, define
\[
d(u, v) := \sum_{i=1}^n |u_i - v_i|.
\]
Every nonzero element in $\mathscr{A}_k[n]$ can be written as
\[
f = \sum_{u \in S} \lambda_u X^u, \quad 0 \ne \lambda_u \in K, \ \varnothing \ne S \subseteq T_k.
\]

Let $M \subseteq \mathscr{A}_k[n]$ be a nonzero $\mathscr{A}_2[n]$-subbimodule, and take $0 \ne f = \sum_{u \in S} \lambda_u X^u \in M$.
For any $h \in \mathscr{H}$,
\[
\pi(h)(X^u) = \frac{1}{2} \sum_{i=1}^n u_i \beta_i(h) \cdot X^u.
\]
Since $\ch(K) = 0$, we can choose $y_1, \dots, y_n \in K$ such that the values
\[
\left\{ \frac{1}{2} \sum_{i=1}^n u_i y_i \ \big| \ u \in S \right\}
\]
are pairwise distinct. Let $h_0 \in \mathscr{H}$ satisfy $\beta_i(h_0) = y_i$. Since $M$ is $\pi(h_0)$-invariant, it follows that $X^u \in M$ for every $u \in S$.

Next, we show that $X^u \in M$ for all $u \in T_k$; hence $M = \mathscr{A}_k[n]$, proving that $\mathscr{A}_k[n]$ is simple.
Assume, for contradiction, that there exists $u \in T_k$ such that $X^u \notin M$. Choose $X^v \in M$ satisfying
\[
d(u, v) = \min \{ d(u, w) \mid X^w \in M \}.
\]
Then $d(u, v) > 0$. As $u, v \in T_k$, there exist indices $i$ and $j$ such that $u_i < v_i$ and $u_j > v_j$. Consider
\[
\pi(x_i x_j)(X^v) = \left( x_j \frac{\partial}{\partial x_i} + x_i \frac{\partial}{\partial x_j} \right)(X^v) = v_i X^{v'} + v_j X^{v''},
\]
where
\[
X^{v'} = x_1^{v_1} \cdots x_i^{v_i-1} \cdots x_j^{v_j+1} \cdots x_n^{v_n}, \quad 
X^{v''} = x_1^{v_1} \cdots x_i^{v_i+1} \cdots x_j^{v_j-1} \cdots x_n^{v_n}.
\]
As $\ch(K) = 0$, we have $v_i \ne 0$. By the previous argument, $X^{v'} \in M$. But
\[
d(u, v') = d(u, v) - 2 < d(u, v),
\]
which contradicts the minimality of $d(u, v)$. Therefore, no such $u$ exists, and $M = \mathscr{A}_k[n]$.
\ep

\section{The subgroup $\aut^0(\mathscr{A}[n],\cdot,\circ )$ of $\aut(\mathscr{A}[n],\cdot,\circ )$} 
\label{SecAutGp}

We denote  $(\mathscr{A}[n],\cdot,\circ)$ simply by $\mathscr{A}[n]$. Recall that $\aut(\mathscr{A}[n],\cdot,\circ)$ is the group of automorphisms of $(\mathscr{A}[n],\cdot,\circ)$, which  will be abbreviated as $\aut(\mathscr{A}[n])$.

Let $\varphi \in \aut(\mathscr{A}[n])$. Since $\varphi$ is an algebra endomorphism of $(\mathscr{A}[n],\cdot)$, it is completely determined by its values on the generators $x_1, \dots, x_n$. Set $\varphi(x_i) = h_i$. Then, for any $f \in \mathscr{A}[n]$,
\[
\varphi(f) = f(h_1, \dots, h_n),
\]
and, moreover, the images satisfy
\[
h_i \circ h_j = \varphi(x_i \circ x_j) = \delta_{ij}.
\]

The group $\aut(\mathscr{A}[n])$ contains the subgroup
\[
\aut^0(\mathscr{A}[n]) := \{\varphi \in \aut(\mathscr{A}[n]) \mid \varphi(\mathscr{A}_i[n]) \subseteq \mathscr{A}_i[n] \text{ for all } i\},
\]
the automorphisms that preserve each graded component $\mathscr{A}_i[n]$. Its structure can be explicitly described using the orthogonal group
\[
O(n,K) = \{A \in GL(n,K) \mid A^T A = I_n\}.
\]

\begin{thm} \label{ThmAut}
There is a group isomorphism
\[
\aut^0(\mathscr{A}[n]) \cong O(n, K).
\]
\end{thm}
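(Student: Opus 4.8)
The plan is to produce an explicit group isomorphism $\Phi\colon\aut^0(\mathscr{A}[n])\to O(n,K)$ by restricting an automorphism to the degree-one component $\mathscr{A}_1[n]=\operatorname{Span}_K\{x_1,\dots,x_n\}$. Given $\varphi\in\aut^0(\mathscr{A}[n])$, by definition $\varphi(\mathscr{A}_1[n])\subseteq\mathscr{A}_1[n]$, and since $\varphi$ is injective while $\mathscr{A}_1[n]$ is $n$-dimensional, the restriction $\varphi|_{\mathscr{A}_1[n]}$ is a linear automorphism of $\mathscr{A}_1[n]$; let $\Phi(\varphi)=(a_{ij})\in GL(n,K)$ be its matrix with respect to the basis $x_1,\dots,x_n$, so that $h_j:=\varphi(x_j)=\sum_i a_{ij}x_i$. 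A check on the generators $x_1,\dots,x_n$ gives $\Phi(\varphi\psi)=\Phi(\varphi)\Phi(\psi)$, so $\Phi$ is a group homomorphism; it is injective because an endomorphism of $(\mathscr{A}[n],\cdot)$ is determined by its values on $x_1,\dots,x_n$, so $\Phi(\varphi)=I_n$ forces $\varphi=\mathrm{id}$.

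Next I would show the image of $\Phi$ is exactly $O(n,K)$. Since $x_i\circ x_j=\sum_\ell\frac{\partial x_i}{\partial x_\ell}\frac{\partial x_j}{\partial x_\ell}=\delta_{ij}$ and $\varphi$ preserves $\circ$, we obtain $\delta_{ij}=\varphi(x_i\circ x_j)=h_i\circ h_j=\sum_\ell a_{\ell i}a_{\ell j}=(\Phi(\varphi)^T\Phi(\varphi))_{ij}$, hence $\Phi(\varphi)^T\Phi(\varphi)=I_n$, i.e.\ $\Phi(\varphi)\in O(n,K)$. Conversely, given $A=(a_{ij})\in O(n,K)$, let $\varphi_A$ be the unique algebra endomorphism of $(\mathscr{A}[n],\cdot)$ determined by $\varphi_A(x_j)=\sum_i a_{ij}x_i$. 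Invertibility of $A$ makes $\varphi_A$ an automorphism of $(\mathscr{A}[n],\cdot)$ with inverse $\varphi_{A^{-1}}$, and $\varphi_A$ preserves every $\mathscr{A}_k[n]$ since it sends degree-one elements to degree-one elements. The only substantial point is that $\varphi_A$ commutes with $\circ$: by the chain rule one has $\frac{\partial}{\partial x_\ell}\varphi_A(f)=\sum_i\varphi_A(f_i)\,a_{\ell i}$ (where $f_i=\partial f/\partial x_i$), so
\[
\varphi_A(f)\circ\varphi_A(g)=\sum_\ell\Big(\sum_i\varphi_A(f_i)a_{\ell i}\Big)\Big(\sum_j\varphi_A(g_j)a_{\ell j}\Big)=\sum_{i,j}\varphi_A(f_i)\varphi_A(g_j)(A^TA)_{ij}=\sum_i\varphi_A(f_i)\varphi_A(g_i)=\varphi_A(f\circ g),
\]
where $A^TA=I_n$ is used in the last step. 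Thus $\varphi_A\in\aut^0(\mathscr{A}[n])$ with $\Phi(\varphi_A)=A$, so $\Phi$ is surjective; together with injectivity and the homomorphism property this yields the isomorphism $\aut^0(\mathscr{A}[n])\cong O(n,K)$.

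All of the above is routine once the setup is fixed, so the one step carrying the real content is the verification that a linear substitution by an orthogonal matrix preserves the biderivation $\circ$ — that is, the interplay between the chain-rule identity $\partial_\ell\varphi_A(f)=\sum_i\varphi_A(f_i)a_{\ell i}$ and the orthogonality relation $A^TA=I_n$. A secondary point to watch is the matrix convention in the definition of $\Phi$, so that $\Phi$ turns out to be a homomorphism rather than an anti-homomorphism; this is pure bookkeeping, since $g\mapsto(g^T)^{-1}$ would in any case furnish an automorphism of $O(n,K)$.
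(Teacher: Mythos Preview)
Your proof is correct and follows essentially the same route as the paper: both define $\Phi$ by restricting $\varphi$ to $\mathscr{A}_1[n]$, use $x_i\circ x_j=\delta_{ij}$ to land in $O(n,K)$, obtain injectivity from the fact that an algebra endomorphism is determined on generators, and verify surjectivity by constructing $\varphi_A$ and checking $\varphi_A(f)\circ\varphi_A(g)=\varphi_A(f\circ g)$ via the chain rule together with $A^TA=I_n$. The only cosmetic difference is that you first note $\Phi(\varphi)\in GL(n,K)$ before deriving orthogonality, whereas the paper deduces invertibility directly from $A^TA=I_n$.
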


\bp
By definition, for any $\varphi \in \operatorname{Aut}^0(\mathscr{A}[n])$, we have
\begin{equation}
    \varphi(x_j) = \sum_{k=1}^n a_{kj} x_k, \quad j = 1, \dots, n. \label{e14}
\end{equation}
Setting $h_j = \sum_{k=1}^n a_{kj} x_k$, the condition $\varphi \in \operatorname{Aut}(\mathscr{A}[n])$ requires
\[
\delta_{ij} = h_i \circ h_j = \sum_{k=1}^n a_{ki} a_{kj}.
\]
This shows that the matrix $A = (a_{ij})$ satisfies
$A^T A = I_n$, so that $A \in O(n,K)$.

Define a map 
    \begin{align*}
        \Phi\colon \aut^0(\mathscr{A}[n]) \longrightarrow O(n,K),\quad 
        \varphi \mapsto  A= (a_{ij})
    \end{align*} 
such that (\ref{e14}) holds.
We claim that $\Phi$ is a group homomorphism.

Let $\Phi(\varphi) = A = (a_{ij})$ and $\Phi(\psi) = B = (b_{ij})$, with $A, B \in O(n,K)$. Then
\begin{align*}
	(\psi \varphi)(x_i) &= \psi(h_i) = \psi\left(\sum_{k=1}^n a_{ki} x_k\right)\\
	&= \sum_{k=1}^n a_{ki} \psi(x_k) = \sum_{j=1}^n \left( \sum_{k=1}^n b_{jk} a_{ki} \right) x_j,
\end{align*}
so that
\[
\Phi(\psi \circ \varphi) = BA = \Phi(\psi) \Phi(\varphi).
\]
Moreover, $\Phi$ is injective, since its kernel consists only of the identity automorphism of $\mathscr{A}[n]$. It remains to show that $\Phi$ is surjective.

Given $A = (a_{ij}) \in O(n,K)$, define $\varphi: \mathscr{A}[n] \to \mathscr{A}[n]$ by
\[
\varphi(x_j) = \sum_{i=1}^n a_{ij} x_i := h_j, \quad j = 1, \dots, n,
\]
and extend $\varphi$ to all $f \in \mathscr{A}[n]$ via
\[
\varphi(f) = f(h_1, \dots, h_n).
\]
Then $\varphi$ is an automorphism of the polynomial algebra $(\mathscr{A}[n], \cdot)$. It remains to verify that $\varphi$ preserves the biderivation structure. Let $H = (h_1, \dots, h_n)$. For any $f,g \in \mathscr{A}[n]$,
\begin{align*}
		\varphi(f) \circ \varphi(g) &= f(h_1, \dots, h_n) \circ g(h_1, \dots, h_n)\\ 
		&= \sum_{j=1}^n \left( \sum_{i=1}^n \frac{\partial f}{\partial x_i}(H) \frac{\partial h_i}{\partial x_j} \right) \left( \sum_{k=1}^n \frac{\partial g}{\partial x_k}(H) \frac{\partial h_k}{\partial x_j} \right)\\
		&= \sum_{i,k=1}^n \left(\frac{\partial f}{\partial x_i}(H) \frac{\partial g}{\partial x_k}(H) \sum_{j=1}^n \frac{\partial h_i}{\partial x_j} \frac{\partial h_k}{\partial x_j}\right)= \sum_{i,k=1}^n \left(\frac{\partial f}{\partial x_i}(H) \frac{\partial g}{\partial x_k}(H) \delta_{ik}  \right)   \\
		&= \sum_{i=1}^n \frac{\partial f}{\partial x_i}(H) \frac{\partial g}{\partial x_i}(H)= \varphi(f\circ g).
	\end{align*}
Hence $\varphi$ is an automorphism of $\mathscr{A}[n]$, i.e., $\Phi(\varphi) = A$, which completes the proof.
\ep

Finally, we determine $\aut(\mathscr{A}[n])$ for $n=1$.

Let $\varphi \in \aut(\mathscr{A}[1])$ and write $\varphi(x) = h$. Then
\[
\varphi(x^{\circ 2}) = h^{\circ 2} = h' \cdot h' = 1.
\]
It follows that $h(x) = \lambda x + \mu$, where $\lambda = \pm 1$ and $\mu \in K$.
Therefore,
\[
\aut(\mathscr{A}[1]) \cong (K, +) \rtimes_\theta \{\pm 1\},
\]
where $\theta(\lambda): K \to K$ acts by $\mu \mapsto \lambda \mu$.

For $n > 1$, determining $\aut(\mathscr{A}[n])$ appears to be significantly more difficult.

\end{document}